\def\namedlabel#1#2{\begingroup
 #2%
 \def\@currentlabel{#2}%
 \phantomsection\label{#1}\endgroup
}
\renewcommand{\PrintDOI}[1]{\href{http://dx.doi.org/\detokenize{#1}}{doi: \detokenize{#1}}%
	\IfEmptyBibField{pages}{, (to appear in print)}{}}
\theoremstyle{plain}
\newtheorem*{theorem*}{Theorem}
\newtheorem*{thmex*}{Theorem~\ref{example}}
\newtheorem*{thmasymp*}{Theorem~\ref{thmAsymp}}
\newtheorem{theorem}{Theorem}[section]
\newtheorem{corollary}[theorem]{Corollary}
\newtheorem{lemma}[theorem]{Lemma}
\newtheorem{proposition}[theorem]{Proposition}
\newtheorem{remark}[theorem]{Remark}
\newtheorem{example}[theorem]{Example}
\theoremstyle{definition}
\newtheorem{definition}[theorem]{Definition}
\newcommand{\ben}{\begin{enumerate}}
\newcommand{\een}{\end{enumerate}}
\newcommand{\ed}{\end{document}}
\definecolor{rrr}{rgb}{.9,0,.1}
\definecolor{rr}{rgb}{.8,0,.3}
\newcommand\LabGrid[2][purple]
\date{}
\title{G-Families of Singquandles}
\title{Oriented Disingquandles and Invariants of Oriented Dichromatic Singular links}
\author[M.I.Sheikh]{Mohd Ibrahim Sheikh}
\address{Department of Humanities And Sciences (Mathematics), St. Peter's Engineering College, Maisammaguda, 
       Medchal, Hyderabad - 500100, India.}
 \email{ibrahim@stpetershyd.com}
 \author[M. Elhamdadi]{Mohamed Elhamdadi}
\address{University of South Florida, Department of Mathematics and Statistics, Tampa, FL, USA}
\email{emohamed@usf.edu}
 \author[D. Ali]{DANISH ALI}
\address{Department of Mathematics, Dalian University of Technology, China}
\email{danishali@mail.dlut.edu.cn}
\date{}
\begin{document}
\maketitle

\begin{abstract}
We introduce and investigate {\it oriented dichromatic singular links}. We also introduce
{\it oriented disingquandles} and use them to define {\it counting invariants for oriented dichromatic singular links}. We provide some examples to show that these invariants distinguish some oriented dichromatic singular links.
\end{abstract}

\tableofcontents

{\bfseries Mathematics Subject Classifications (2020):} 57M25, 57M27.\\
{\bfseries Key words and Phrases:}  
Oriented Singular link; Dichromatic link; Oriented Dichromatic link; 
Oriented Dichromatic singular link; 
Oriented Singquandle; Disingquandle; Oriented Disingquandle; Disingquandle counting invariant; Oriented Disingquandle counting invariant.\\ 
\section{Introduction}
A simple closed curve in $\mathbb{R}^{3}$ is called a mathematical knot and disjoint union of $n$ knots is called an $n$-link or a link with $n$ components. Knots and links are represented by knot and link diagrams in $\mathbb{R}^{2}$ in which we have clear information about the regions where the curve goes over or under itself. Such regions are called crossing points of the knot or link and 
 this is referred to as classical knot theory. This 
theory has been generalized to some other theories such as virtual and singular knot theories. In \cite{SEA} singular links have been generalized to dichromatic singular links in which the components of a singular link are labelled by $``1"$ or $``2"$. 
In \cite{SEA} a notion of unoriented dichromatic singular links was introduced and used singquandles  to construct 
coloring invariants for such dichromatic singular links 
 in order to classify them. 
In this paper we introduce an oriented version of such links and 
 study their counting invariants. More specifically we introduce the notion of {\it oriented dichromatic singular links}.
\par Quandles, singquandles and oriented singquandles are the algebraic structures whose axioms are motivated by the classical Reidemeister moves, generalized singular Reidemeister moves and oriented generalized singular Reidemeister moves respectively \cites{CCEH1, CCEH, EN, J, M, NOS, Oyamaguchi}. 
 Quandles and involutive quandles have been used to distinguish oriented (respectively unoriented) knots and links.  
Involutive quandles when generalized with the purpose of studying unoriented singular links, give rise to the 
algebraic structure called singquandles \cite{CEHN}. 
 Oriented singquandles were introduced in order to study and distinguish oriented singular knots and links \cite{BEHY}.
Considering two or more quandles, one obtains a new algebraic structure, called $\mathbb{G}$-{\it Family of quandles} whose axioms are motivated by handlebody-knot theory \cite{IMJO}. 
The case of $\mathbb{Z}_{2}$-{\it Family of quandles} was investigated by Lee and Sheikh who  showed that the strucure gives an invariant of oriented dichromatic links \cite{LS}. However considering two singquandles, a $\mathbb{Z}_{2}$-{\it Family of singquandles} (or {\it disingquandle}) was obtained and investigated by Sheikh {\it et al.} who showed that the structure gives an invariant of unoriented dichromatic singular links \cite{SEA}.  
\par In this paper, we introduce the notions of {\it oriented dichromatic singular links} and {\it oriented disingquandles} which are natural generalizations of the notions introduced in \cite{SEA}. An $n$-component oriented singular link whose components are labelled either by $``1"$ or $``2"$ forms an {\it oriented dichromatic singular link}. By taking a family of oriented singquandles, we construct an algebraic structure called {\it oriented disingquandle} and whose axioms directly come from the generalized oriented dichromatic singular Reidemeister moves. We investigate properties of {\it oriented disingquandles} and give some examples.  Furthermore, we define and study the counting invariant of oriented dichromatic singular links.  We also distinguish some oriented dichromatic singular links by using oriented disingquandles.
\par This paper is organized in the following manner. In Section~\ref{SOSODL}, we review the notions of singular links, oriented singquandles and oriented dichromatic links. Section~\ref{ODSL} deals with oriented dichromatic singular links. In Section~\ref{GFOSQ}, we introduce oriented disingquandles and study some of their properties thus allowing the construction of examples. Section~\ref{CIUSDL} introduces the coloring invariant of oriented dichromatic singular links and shows that the invariant distinguishes some oriented dichromatic singular links. 

\section{Singular Links, Oriented Singquandles and Oriented Dichromatic Links}\label{SOSODL}

 In $1990$ Vassiliev introduced singular knot theory in \cite{V} as a generalization of classical knot theory.  Axiomatizing the \emph{generalized} Reidemeister moves of singular knot theory led to algebraic structures allowing  generalizations of various classical knot invariants to singular knots.  In \cite{CEHN} and \cite{BEHY}, the authors used involutive quandles and quandles to study unoriented and respectively oriented singular links by introducing {\it singquandles} and {\it oriented singquandles} respectively.  Quandles and singquandles were used in \cite{LS} and in \cite{SEA}  
 to study oriented dichromatic links and unoriented dichromatic singular links by introducing algebraic structures {\it diquandles} and {\it disingquandles} respectively. Now we review the basic definitions of singular links, oriented singquandles and oriented dichromatic links.
 \begin{definition}\cite{V}\label{Def2.1}
  A classical $n$-link $\mathnormal{L}$ in $\mathbb{R}^{3}$ having one or more singular points at its crossing points is called a singular $n$-link and are represented by singular link diagrams $\mathnormal{D}$ in $\mathbb{R}^{2}$.  
 \end{definition}
 A singular link $\mathnormal{L}$ is said to be oriented if its each component is oriented. The following Figure~\ref{OSLinks} shows two $2$-oriented singular link diagrams.
 \begin{figure}[h]
       \centering
		\includegraphics[width=0.6\textwidth]{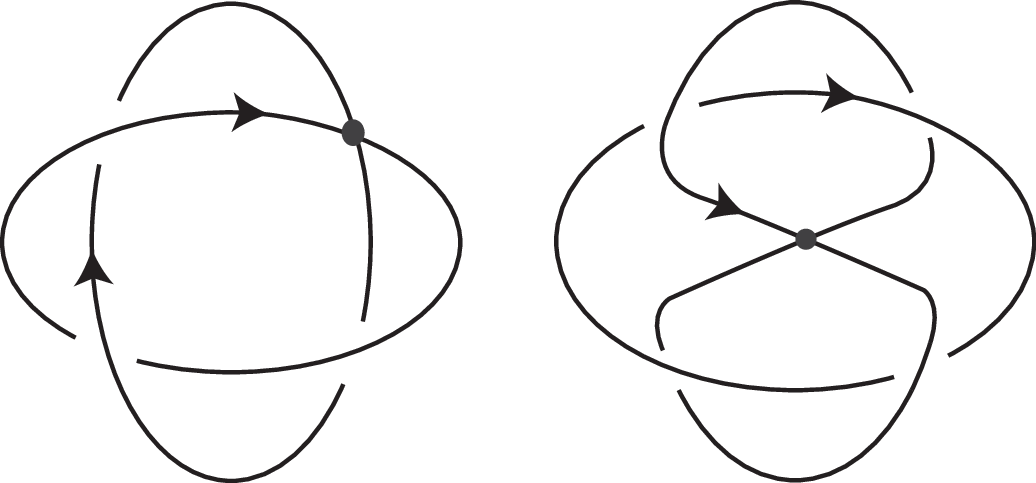}
	\caption{Oriented Singular Links}
	\label{OSLinks}
\end{figure}
\par Under the generating set of oriented Reidemeister moves of singular links in Figure \ref{GSOOSRM}, if two oriented singular link diagrams $\mathnormal{D_1}$ and $\mathnormal{D_2}$ representing the oriented singular links $\mathnormal{L_1}$ and $\mathnormal{L_2}$ respectively, can be obtained from each other, then the links $\mathnormal{L_1}$ and $\mathnormal{L_2}$ are called isotopy equivalent.
\begin{figure}[h]
       \centering
       \includegraphics[width=0.9\textwidth]{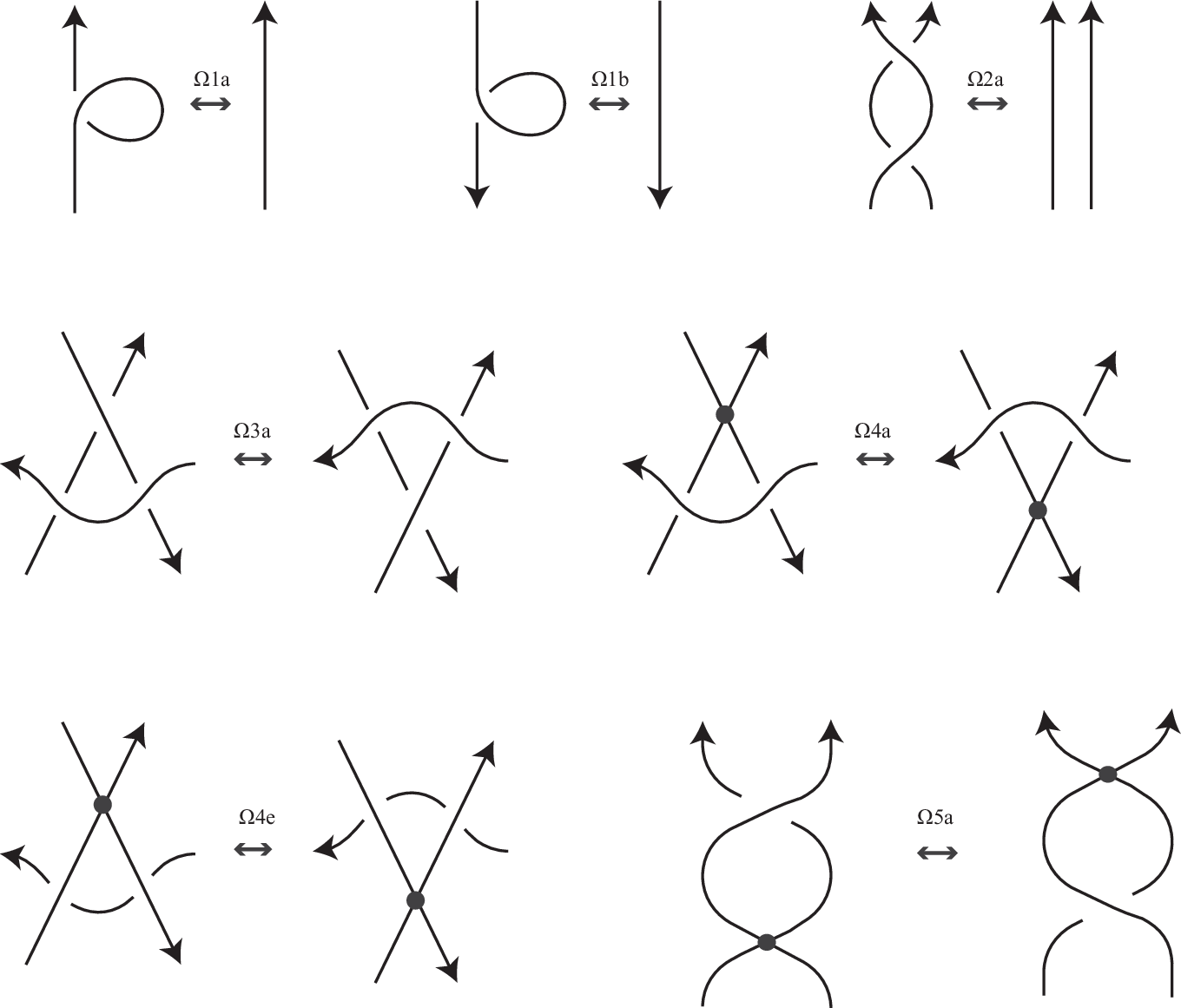}
  \caption{Generating Set of Oriented Singular Reidemeister Moves}
  \label{GSOOSRM}
\end{figure}\\
\begin{definition}\cite{BEHY} \label{Def2.2}
Let $(\mathnormal{X}, *)$ be a quandle. Let $\mathbf{R_1}$ and $\mathbf{R_2}$ be two maps from $\mathnormal{X} \times \mathnormal{X}$ to $\mathnormal{X}$. The quadruple $(\mathnormal{X}, *, \mathbf{R_1}, \mathbf{R_2})$ is called an {\it oriented singquandle} if the following axioms are satisfied:
\[ 
\mathbf{R_1}(x \bar{*} y, z) * y = \mathbf{R_1}(x, z * y), \tag{2.2.1} \label{eq:2.2.1} \]
\[
\mathbf{R_2}(x \bar{*} y, z) = \mathbf{R_2}(x, z * y) \bar{*} y, \tag{2.2.2} \label{eq:2.2.2}
\]
\[
(y \bar{*} \mathbf{R_1}(x, z)) * x = (y * \mathbf{R_2}(x, z)) \bar{*} z, \tag{2.2.3} \label{eq:2.2.3} 
\]
\[
\mathbf{R_2}(x, y) = \mathbf{R_1}(y, x * y), \tag{2.2.4} \label{eq:2.2.4} 
\]
\[
\mathbf{R_1}(x, y) * \mathbf{R_2}(x, y) = \mathbf{R_2}(y, x * y). \tag{2.2.5} \label{eq:2.2.5} 
\]
\end{definition}
\par All the axioms of the above definition \ref{Def2.2} are derived from the generating set of oriented singular Reidemeister moves \ref{GSOOSRM} after coloring its arcs by the elements of the quandle $\mathnormal{X}$.
\par Some typical examples of oriented singquandles are given below:
\begin{itemize}
    \item For a quandle $(X,*)$ with $X=G$ as a non-abelian multiplicative group and $x * y= y^{-1}xy$, the quadruple $(\mathnormal{X}, *, \mathbf{R_1}, \mathbf{R_2})$ forms an oriented singquandle if and only if the following conditions are satisfied:
    \begin{enumerate}
        \item $y^{-1} \mathbf{R_1}(yxy^{-1}, z)y = \mathbf{R_1}(x, y^{-1}zy)$,
        \item $\mathbf{R_2}(yxy^{-1}, z) = y\mathbf{R_2}(x, y^{-1}zy)y^{-1}$,
        \item $x^{-1}\mathbf{R_1}(x, z)y[\mathbf{R_1}(x, z)]^{-1}x = z[\mathbf{R_2}(x, z)]^{-1}y[\mathbf{R_2}(x, z)]z^{-1}$,
        \item $\mathbf{R_2}(x, y) = \mathbf{R_1}(y, y^{-1})xy$,
        \item $[\mathbf{R_2}(x, y)]^{-1}\mathbf{R_1}(x, y)\mathbf{R_2}(x, y) = \mathbf{R_2}(y, y^{-1}xy)$
    \end{enumerate}
    \item For a quandle $(X,*)$ where $X$ is a group $G$ and $x*y= y^{-1}xy$, then for $n \ge 1$ the quadruple $(\mathnormal{X}, *, \mathbf{R_1}, \mathbf{R_2})$ forms an oriented singquandle if:
    \begin{enumerate}
        \item $\mathbf{R_1}(x, y) = x(xy^{-1})^{n}$ and $ \mathbf{R_2}(x, y) = y(x^{-1}y)^{n}$,
        \item $\mathbf{R_1}(x, y) = (xy^{-1})^{n}x$ and $ \mathbf{R_2}(x, y) = (x^{-1}y)^{n}y$,
        \item $\mathbf{R_2}(x, y) = \mathbf{R_2}(xy^{-1}x, x)[\mathbf{R_1}(xy^{-1}x, x)]^{-1} \mathbf{R_2}(xy^{-1}x, x)$,
        \item $\mathbf{R_1}(x, y) = x(yx^{-1})^{n+1}$ and $ \mathbf{R_2}(x, y) = x(y^{-1}x)^{n}$.
    \end{enumerate}
\end{itemize}

\begin{definition}\cite{CCE1} \label{Deff2.2'}
    Let $(X, *_1, *_2, R_1, R_2)$ be an oriented singquandle. A non-empty subset $S$ of $X$ is called an oriented sub-singquandle of $X$ 
    if $(S, *_1, *_2, R_1, R_2)$ is an oriented singquandle.
\end{definition}

\begin{lemma}\cite{CCE1} \label{lem2.1}
    Let $(X, *_1, *_2, R_1, R_2)$ be an oriented singquandle. A non-empty subset $S$ of $X$ is an oriented sub-singquandle of $X$ if and only if $S$ is closed under the operations $*_1, *_2, R_1$ and $R_2$. 
\end{lemma}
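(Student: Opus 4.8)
The plan is to prove the two implications separately, relying on the single structural fact that every defining condition of an oriented singquandle is a universally quantified equation; once that is noted, the entire lemma reduces to the question of well-definedness of the restricted operations. I would set the notation so that $*_1$ is the quandle operation and $*_2$ its dual (so that the pair $*_1,*_2$ encodes the quandle's right-invertibility), matching the statement.

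First I would dispatch the forward direction. If $S$ is an oriented sub-singquandle, then by Definition~\ref{Deff2.2'} the quadruple $(S, *_1, *_2, R_1, R_2)$ is itself an oriented singquandle; in particular each of the four maps $*_1,*_2,R_1,R_2$ must be an operation on $S$, i.e.\ must send $S\times S$ into $S$. Hence $S$ is closed under all four operations, which is exactly the asserted condition.

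For the converse I would assume $S$ is closed under $*_1,*_2,R_1,R_2$. Then the restrictions of these maps to $S\times S$ are genuine operations $S\times S\to S$ agreeing pointwise on $S$ with the ambient operations of $X$. It remains to verify the oriented singquandle axioms on $S$. The key observation is that all of them---the quandle idempotency $x*_1 x = x$, the right self-distributivity of $*_1$, the compatibility $(x*_1 y)*_2 y = x = (x*_2 y)*_1 y$ expressing that $*_2$ is dual to $*_1$, together with the five oriented axioms \eqref{eq:2.2.1}--\eqref{eq:2.2.5}---are identities valid for \emph{all} elements of $X$. Since $S\subseteq X$ and the restricted operations coincide with those of $X$ on $S$, each identity holds in particular for all elements of $S$. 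Therefore $(S, *_1, *_2, R_1, R_2)$ satisfies every axiom and is an oriented singquandle, hence an oriented sub-singquandle of $X$.

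The only point that requires any care, and the closest thing to an obstacle, is the invertibility part of the quandle axiom: in a formulation using only $*_1$ one would have to check that the inverse of right translation by $y$ preserves $S$. Here this is automatic precisely because $*_2$ is included among the structural operations and closure under $*_2$ is assumed; the invertibility is encoded by the equational identities $(x*_1 y)*_2 y = x$ and $(x*_2 y)*_1 y = x$, which descend to $S$ exactly like the remaining identities. Consequently there is no genuine difficulty, and the proof is the routine verification that a subset closed under all structural operations inherits every equational axiom.
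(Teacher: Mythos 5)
Your proof is correct: the forward direction is immediate from Definition~\ref{Deff2.2'}, and the converse follows because every oriented singquandle axiom is a universally quantified identity that automatically descends to any subset closed under the four operations, with right-invertibility handled equationally via the dual operation. The paper itself cites this lemma from \cite{CCE1} without proof, but your argument is exactly the one the paper uses for the analogous statement about sub-disingquandles (Theorem~\ref{Theorem4.1}), so there is nothing further to compare.
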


\begin{definition}\label{Def2.3}
A classical $n$-link $\mathnormal{L}$ in $\mathbb{R}^{3}$ with its each component either labelled by $``1"$ or $``2"$ is called a dichromatic link with $n$ components and is represented by dichromatic link diagrams $\mathnormal{D}$ in $\mathbb{R}^{2}$.
\end{definition}
A dichromatic link $\mathnormal{L}$ is said to be oriented if its every component is oriented. For example, see Figure~\ref{OdcLinks} which shows two oriented dichromatic links with two components.
\begin{figure}[h]
		\vspace{0.5cm}
		\includegraphics[width=0.65\textwidth]{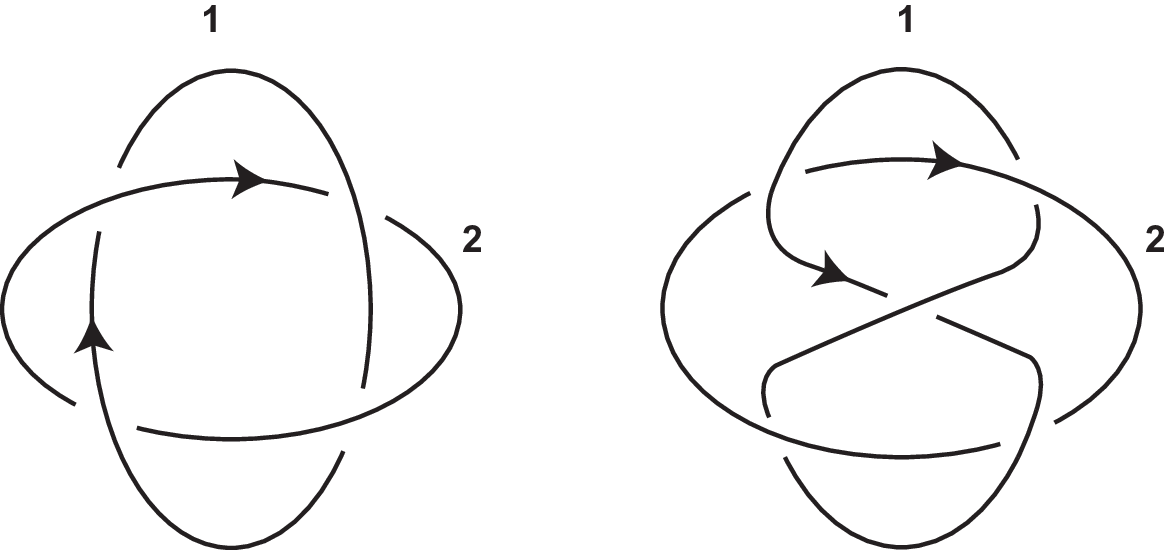}
	\caption{Oriented Dichromatic Links}
	\label{OdcLinks}
\end{figure}
\par After applying finite number of generalized Reidemeister moves of oriented dichromatic links in Figure \ref{OdcMoves}, if two oriented dichromatic link diagrams $\mathnormal{D_1}$ and $\mathnormal{D_2}$ representing the oriented dichromatic links $\mathnormal{L_1}$ and $\mathnormal{L_2}$ respectively, can be obtained from each other, then the links $\mathnormal{L_1}$ and $\mathnormal{L_2}$ are called isotopy equivalent.
\begin{figure}[h]
\centering
\includegraphics[width=0.9\textwidth]{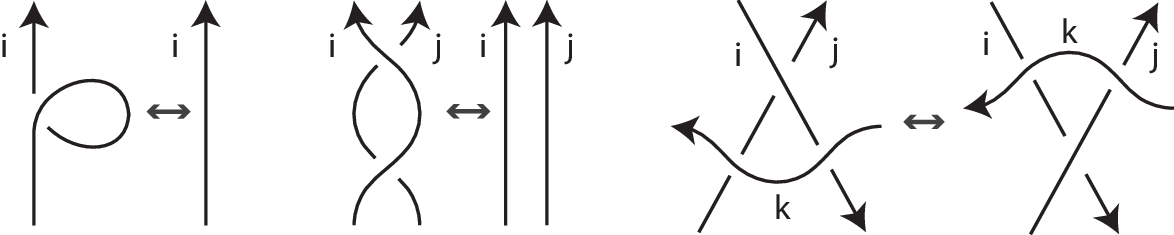}
	\caption{Generalized Reidemeister Moves for Oriented Dichromatic Links}
	\label{OdcMoves}
\end{figure}
\section{Oriented Dichromatic Singular Links}\label{ODSL}
In this section we define the analog of the links defined in 
\cite{SEA}. Thus we have the following definition.
\begin{definition}\label{Def3.1}
An oriented singular link $\mathnormal{L}$ in $\mathbb{R}^{3}$ whose each component is colored (labelled) by either $``1"$ or $``2"$ is called an {\it oriented dichromatic singular link}.
\end{definition}
An oriented dichromatic singular link $\mathnormal{L}$ in $\mathbb{R}^{3}$ is represented by an oriented dichromatic singular link diagram $\mathnormal{D}$ in $\mathbb{R}^{2}$ in which each component is labelled by $``1"$ or $``2"$. Figure \ref{OsdLinks} shows two examples of oriented dichromatic singular link diagrams.
\begin{figure}[h]
\includegraphics[width=0.7\textwidth]{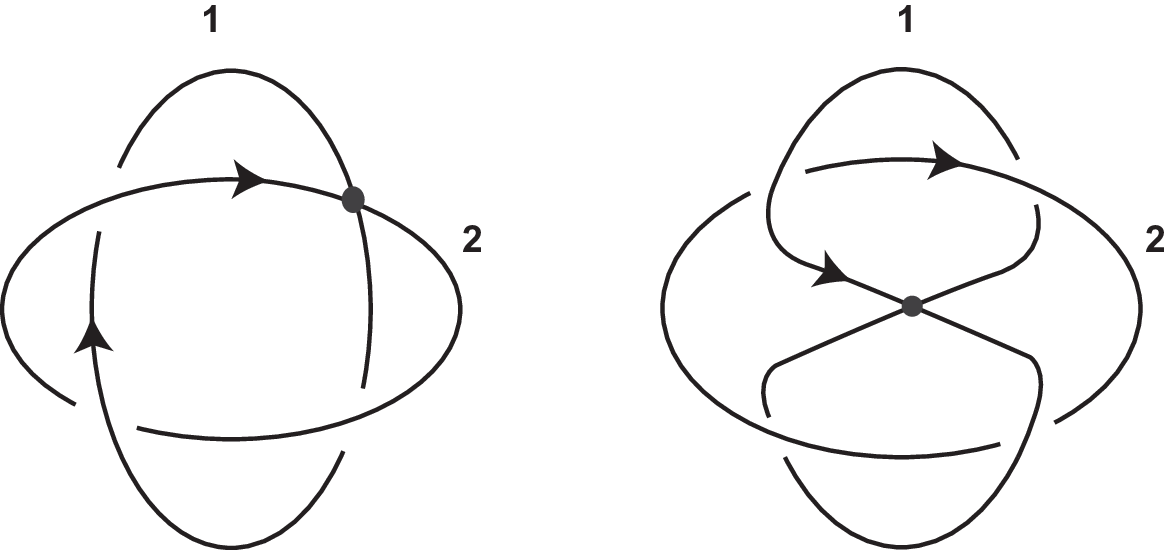}
	\caption{Oriented Dichromatic Singular Links}
	\label{OsdLinks}
\end{figure}
\par Two oriented dichromatic singular links $\mathnormal{L_1}$ and $\mathnormal{L_2}$ in $\mathbb{R}^{3}$ are said to be {\it ambient isotopic} if there exists an orientation preserving self homeomorphism $h: \mathbb{R}^{3} \rightarrow \mathbb{R}^{3}$ that takes one link to the other and preserves the singularities as well as the labels $``1", ``2"$ such that $h(\mathnormal{L_1})= \mathnormal{L_2}$. Thus two oriented dichromatic singular links $\mathnormal{L_1}$ and $\mathnormal{L_2}$ are equivalent if one can be obtained from the other by a finite sequence of generalized oriented dichromatic singular Reidemeister moves preserving the label of each component as shown in the Figure \ref{OdsRMoves}. Let $\mathnormal{D_1}$ and $\mathnormal{D_2}$ be two oriented dichromatic singular link diagrams in $\mathbb{R}^{2}$ representing $\mathnormal{L_1}$ and $\mathnormal{L_2}$, respectively. Then $\mathnormal{L_1}$ and $\mathnormal{L_2}$ are equivalent if and only if $\mathnormal{D_1}$ and $\mathnormal{D_2}$ can be transformed into each other by a finite sequence of generalized oriented dichromatic singular Reidemeister moves shown in the following Figure \ref{OdsRMoves} where ${i, j, k} \in \{1, 2\}$. 
\par An oriented dichromatic singular link with $n$ components is called as an $n$-component oriented dichromatic singular link. Thus an $n$-component oriented dichromatic singular link in $\mathbb R^3$ can be defined as $L=K_1\cup \cdots \cup K_n$. Taking $n=2$, we obtain $2$-component oriented dichromatic singular links. Some $2$-component oriented dichromatic singular link diagrams (see p 814 of \cite{Oyamaguchi}) are shown in Figure \ref{TOSDLinks}. 
\begin{figure}[htbp]
\centering
\includegraphics[width=0.8\textwidth]{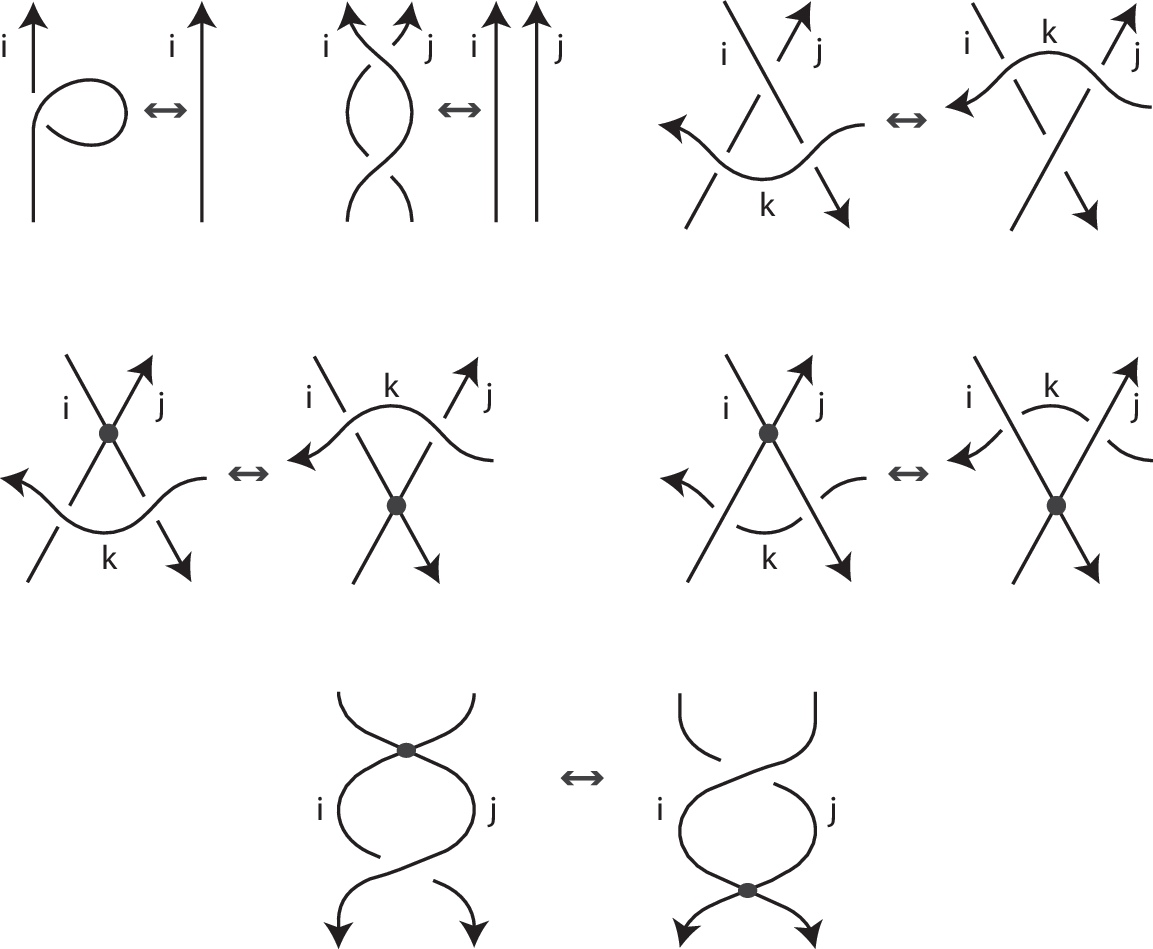}
	\caption{Regular Oriented Dichromatic Reidemeister Moves $RI, RII$ and $RIII$ on the top and Oriented Dichromatic Singular Reidemeister Moves $RIVa, RIVb$ and $RV$ in the middle and on the bottom.}
	\label{OdsRMoves}
\end{figure}
\vspace{0.6cm}
\begin{proposition}\label{Prop3.1}
Let $\mathnormal{L_1}$ and $\mathnormal{L_2}$ be two oriented dichromatic singular links in $\mathbb{R}^{3}$ and let $\mathnormal{D_1}$ and $\mathnormal{D_2}$ be two oriented dichromatic singular link diagrams in $\mathbb{R}^{2}$ representing $\mathnormal{L_1}$ and $\mathnormal{L_2}$, respectively. Then $\mathnormal{L_1}$ and $\mathnormal{L_2}$ are equivalent if and only if $\mathnormal{D_1}$ and $\mathnormal{D_2}$ are transformed into each other by a finite sequence of generalized oriented Reidemeister moves for oriented dichromatic singular links which preserve the orientation, singularities and the label of each component as shown in the Fig. \ref{OdsRMoves} where ${i, j, k} \in \{1, 2\}$ and ambient isotopies of $\mathbb{R}^{2}$.
\end{proposition}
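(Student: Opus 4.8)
The plan is to treat this as the Reidemeister-type theorem for the category of oriented dichromatic singular links, proving it in the two standard directions and reducing the labeled (dichromatic) statement to the already-understood unlabeled oriented singular case.

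First I would dispose of the easy (``if'') direction. Each local move $RI$--$RV$ of Figure~\ref{OdsRMoves}, together with each planar isotopy of $\mathbb{R}^{2}$, is supported in a small disk, and for each I would exhibit an explicit ambient isotopy of $\mathbb{R}^{3}$ realizing the move inside a ball over that disk while fixing everything outside. A planar isotopy of $\mathbb{R}^{2}$ lifts to an ambient isotopy of $\mathbb{R}^{3}$ by extending it through a vertical collar, and each Reidemeister move is modeled by a standard local movement of the strands (respectively, of the rigid $4$-valent vertex for $RIVa$, $RIVb$, $RV$) that preserves the over/under data and the vertex structure. Since every one of these elementary isotopies preserves orientation, keeps the singular vertices rigid, and leaves the component labels untouched, composing the finitely many of them that realize a given sequence of moves yields an orientation-preserving self-homeomorphism $h$ of $\mathbb{R}^{3}$ with $h(\mathnormal{L_1})=\mathnormal{L_2}$ respecting singularities and labels, so $\mathnormal{L_1}$ and $\mathnormal{L_2}$ are equivalent.

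For the (``only if'') direction I would argue by general position on a $1$-parameter family of diagrams. Assuming $\mathnormal{L_1}$ and $\mathnormal{L_2}$ are equivalent, choose an ambient isotopy $\{h_t\}_{t\in[0,1]}$ with $h_0=\mathrm{id}$ and $h_1(\mathnormal{L_1})=\mathnormal{L_2}$ that preserves orientation, the rigid singular vertices, and the labels, and perturb it to be generic with respect to the projection $\pi\colon\mathbb{R}^{3}\to\mathbb{R}^{2}$. Then $D_t=\pi(h_t(\mathnormal{L_1}))$ is a regular oriented dichromatic singular diagram for all but finitely many $t$, and between consecutive critical values the diagram changes only by a planar isotopy. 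At each critical value exactly one codimension-one degeneracy of the projection occurs; enumerating these, each is (up to planar isotopy) one of the moves of Figure~\ref{OdsRMoves}: a cusp or self-tangency of a single strand gives $RI$, a tangency of two strands gives $RII$, a triple point of regular strands gives $RIII$, a strand becoming tangent to or crossing the projection of a singular vertex gives $RIVa$ or $RIVb$ according to the over/under relation, and a triple point involving a singular vertex gives $RV$. Because $h_t$ preserves labels and the label is locally constant along each component, every strand carries a well-defined label in $\{1,2\}$ throughout, so each move occurs in its decorated form with indices $i,j,k\in\{1,2\}$, while orientation-preservation rules out any orientation-reversing variant.

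The main obstacle is the completeness claim inside this last step: verifying that the degeneracies involving the rigid $4$-valent vertices are exhausted by $RIVa$, $RIVb$, and $RV$. Unlike the classical case, a singular vertex is carried rigidly, so I must develop the transversality theory for a link with marked rigid crossings and confirm that no further vertex--vertex or vertex--strand catastrophe can arise generically beyond those three. I would handle this by adapting the oriented singular Reidemeister theorem already present in the literature, from which the axioms of Definition~\ref{Def2.2} are derived, and then observing that adjoining the labels requires no new moves: since the classifying homeomorphism maps each labeled component to a component of the same label, the unlabeled move set simply lifts by decorating every arc with its constant label, which is exactly the family of moves parametrized by $i,j,k\in\{1,2\}$ in Figure~\ref{OdsRMoves}. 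This reduction is what lets the argument close with the stated finite generating set.
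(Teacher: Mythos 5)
The paper offers no proof of Proposition~\ref{Prop3.1}: it is stated immediately after a paragraph that already asserts the same equivalence, and is treated as the routine dichromatic decoration of the oriented singular Reidemeister theorem (the pattern of \cite{BEHY}, \cite{LS} and \cite{SEA}). Your proposal therefore supplies strictly more than the paper does, and its architecture is the standard one: realize each local move of Figure~\ref{OdsRMoves} and each planar isotopy by an ambient isotopy supported in a ball for the ``if'' direction, and run a general-position argument on a generic one-parameter family of projections for the ``only if'' direction. The genuinely load-bearing observation in your reduction --- that adjoining labels creates no new moves because the label is locally constant on each component and is preserved by the classifying isotopy, so the unlabeled generating set simply lifts to the family indexed by $i,j,k\in\{1,2\}$ --- is exactly the right point, and is implicitly all the paper is relying on. Two cautions. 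First, your dictionary between codimension-one catastrophes and moves is slightly off: a cusp of the projection gives $RI$, while any tangency (including a self-tangency of a single strand) gives $RII$, and $RV$ arises from a classical crossing sliding past a rigid vertex rather than from a triple point; this does not damage the argument but should be stated correctly if the enumeration is to carry the proof. Second, as you yourself flag, the completeness of $\{RIVa, RIVb, RV\}$ for the rigid-vertex degeneracies is the one step that cannot be waved through and must be imported from the generating-set theorem for oriented singular links in \cite{BEHY}; with that citation in place your argument closes, and it is a more honest treatment than the paper's bare assertion.
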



\section{Oriented Disingquandles}\label{GFOSQ}
In this section we introduce an analog of the notion of $\mathbb{Z}_2$-Family of 
Singquandles defined in section $4$ of \cite{SEA}. The analog contains an extra information of orientation. Thus we have the following definition.
\begin{definition}\label{Def4.1}
Let $X$ be a set equipped with two binary operations $*_1$ and $*_2$ such that both $(X,*_1), (X,*_2)$ are quandles.  Let $\mathbf{R_1}, \mathbf{R_2}$ be two maps from $\mathnormal{X} \times \mathnormal{X}$ to $\mathnormal{X}$ such that the quadruples $(\mathnormal{X}, *_1, \mathbf{R_1}, \mathbf{R_2})$ and $(\mathnormal{X}, *_2, \mathbf{R_1}, \mathbf{R_2})$ are oriented singquandles. Then the quintuple $(\mathnormal{X}, *_1, *_2, \mathbf{R_1}, \mathbf{R_2})$ is called an {\it oriented disingquandle} 
if the following axioms are satisfied
\[
(y \bar{*}_{1} \mathbf{R_1}(x, z)) *_{2} x = (y *_{2} \mathbf{R_2}(x, z)) \bar{*}_{1} z, \tag{4.1.1} \label{eq:4.1.1}
\]
\[
(y \bar{*}_{2} \mathbf{R_1}(x, z)) *_{1} x = (y *_{1} \mathbf{R_2}(x, z)) \bar{*}_{2} z, \tag{4.1.2} \label{eq:4.1.2}
\]
\[
\mathbf{R_1}(x, y) *_{1} \mathbf{R_2}(x, y) = \mathbf{R_2}(y, x *_{2} y), \tag{4.1.3} \label{eq:4.1.3} 
\]
\[
\mathbf{R_1}(x, y) *_{2} \mathbf{R_2}(x, y) = \mathbf{R_2}(y, x *_{1} y), \tag{4.1.4} \label{eq:4.1.4} 
\]
\end{definition}
In other words an oriented disingquandle is a pair of quandles with binary operations $*_1$, $*_2$ and two additional maps $\mathbf{R_1}$ and $\mathbf{R_2}$ such that the singquandle axioms $(\ref{eq:2.2.1}) - (\ref{eq:2.2.5})$ as well as the additional axioms $(\ref{eq:4.1.1}) - (\ref{eq:4.1.4})$ are all satisfied simultaneously. Thus it follows from this definition that every  oriented disingquandle is a quandle.
\noindent
\begin{remark}
    Notice that in this definition, the fact that $(\mathnormal{X}, *_1, \mathbf{R_1}, \mathbf{R_2})$ is an oriented singquandle gives, by equation~(\ref{eq:2.2.5}) of Definition~\ref{Def2.2}, that $\mathbf{R_1}(x, y) *_1 \mathbf{R_2}(x, y) = \mathbf{R_2}(y, x *_1 y)$. Now equation~(\ref{eq:4.1.3}) gives $\mathbf{R_1}(x, y) *_{1} \mathbf{R_2}(x, y) = \mathbf{R_2}(y, x *_{2} y)$ and thus one obtains that  
    
\[
R_2(y,x*_1y)=R_2(y,x*_2y).
\]
\end{remark}

The above axioms of an oriented disingquandle come from the generalized oriented dichromatic singular Reidemeister moves shown in Figure~\ref{OdsRMoves} when we take the coloring rule shown in Figure~\ref{OdsLinkColoring} under consideration.
\begin{figure}[h]
		\includegraphics[width=0.8\textwidth]{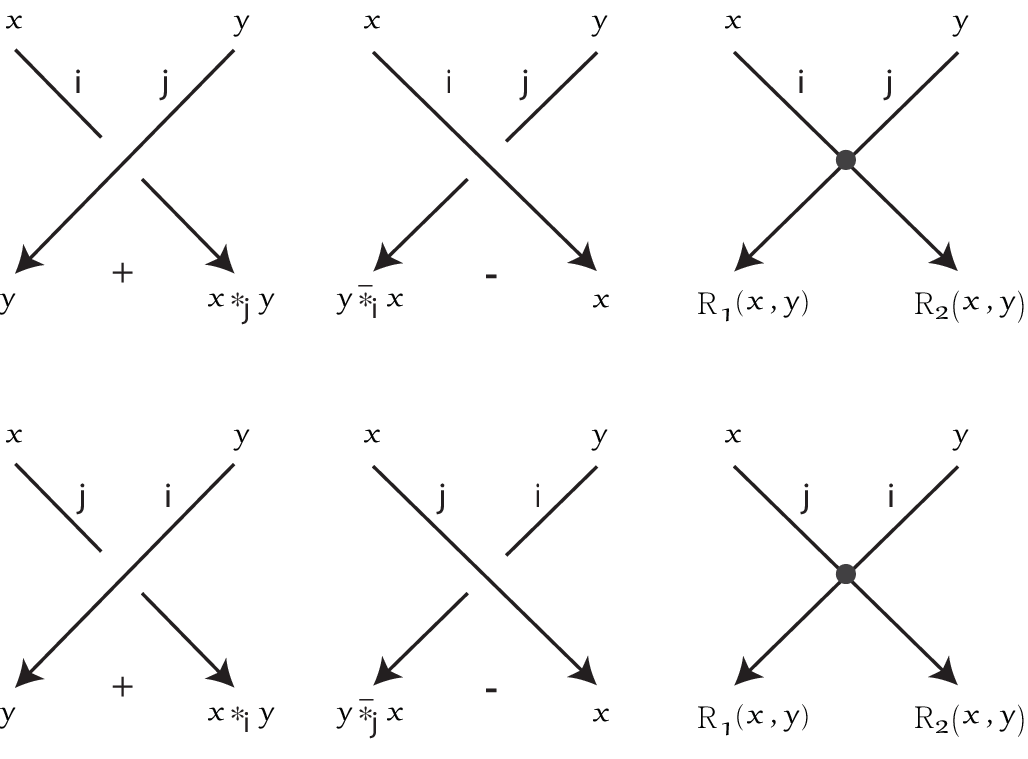}
	\caption{Coloring by an oriented disingquandle}
	\label{OdsLinkColoring}
\end{figure}
\par The following lemma is motivated by the above definition \ref{Def4.1}.
\begin{lemma}\label{lem4.1}
The set of colorings of an oriented dichromatic singular link by an oriented disingquandle does not change by the generalized oriented dichromatic singular Reidemeister moves shown in Figure \ref{OdsRMoves}.
\end{lemma}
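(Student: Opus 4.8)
The plan is to prove invariance locally, one generalized move at a time. Each move in Figure~\ref{OdsRMoves} is supported inside a small disk, agreeing with the ambient diagram on the arcs meeting the boundary of that disk; so it suffices to produce, for every move, a bijection between the admissible colorings of the left-hand fragment and those of the right-hand fragment that fixes the colors of the boundary arcs. Since the moves are reversible, exhibiting such a boundary-fixing correspondence in one direction (and the same for the reverse move) shows that the coloring sets of the two diagrams are in bijection, which is the content of the lemma. Throughout I would use the coloring rule of Figure~\ref{OdsLinkColoring}: a classical crossing whose relevant strand carries label $i\in\{1,2\}$ imposes its relation through $*_i$ (equivalently $\bar{*}_i$), while a singular crossing imposes relations through $\mathbf{R_1}$ and $\mathbf{R_2}$.

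First I would dispense with the three regular moves $RI$, $RII$, $RIII$. Invariance under $RI$ is idempotency $x *_i x = x$, invariance under $RII$ is the existence and defining property of the dual operation $\bar{*}_i$, and invariance under $RIII$ follows, for each label pattern, from the self-distributivity of the quandles together with their compatibility; all of this is exactly the coloring invariance already present for the $\mathbb{Z}_2$-family of quandles underlying oriented dichromatic links, cf.~\cite{LS}. Since by Definition~\ref{Def4.1} both $(X,*_1)$ and $(X,*_2)$ are quandles, these cases require no new input beyond the dichromatic coloring convention.

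The substance of the lemma lies in the singular moves. For the moves $RIVa$ and $RIVb$, in which an ordinary arc is pushed across a singular crossing, writing the outgoing colors on each side via the coloring rule and equating them produces axioms~\eqref{eq:2.2.1} and~\eqref{eq:2.2.2}; here only a single classical operation $*_i$ participates, so these identities hold for each $i$ because $(X,*_i,\mathbf{R_1},\mathbf{R_2})$ is an oriented singquandle, and no mixed-label relation is created. By contrast, the move that carries a strand past the singular vertex yields axiom~\eqref{eq:2.2.3} when all labels coincide and its two mixed-label forms~\eqref{eq:4.1.1} and~\eqref{eq:4.1.2} when the transverse strand carries the opposite label; and the move $RV$, comparing the two local pictures of the singular crossing, yields axioms~\eqref{eq:2.2.4},~\eqref{eq:2.2.5} in the monochromatic case and the mixed forms~\eqref{eq:4.1.3},~\eqref{eq:4.1.4} otherwise. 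In every instance the required identity is read directly off the axiom list of Definition~\ref{Def4.1} once the two fragments are colored and the boundary colors held fixed.

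The main obstacle will be the bookkeeping in the mixed-label singular moves: one must track precisely which of $*_1,*_2,\bar{*}_1,\bar{*}_2$ and which of $\mathbf{R_1},\mathbf{R_2}$ governs each internal arc as a strand is slid past the singular vertex, and confirm that the resulting equations are exactly \eqref{eq:4.1.1}--\eqref{eq:4.1.4} and not some permutation of them. Once the correspondence is written down, checking that it is a genuine bijection rather than merely a well-defined assignment reduces to the invertibility of the quandle operations, which guarantees that on each side of every move the boundary colors determine the internal arc colors uniquely.
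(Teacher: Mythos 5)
Your proposal is correct and follows essentially the same route as the paper's own (much terser) proof: a move-by-move local analysis establishing a boundary-fixing bijection of colorings, with the regular moves handled by the quandle axioms and the singular moves by axioms (\ref{eq:2.2.1})--(\ref{eq:2.2.5}) and (\ref{eq:4.1.1})--(\ref{eq:4.1.4}). The paper simply asserts this correspondence in two sentences, so your write-up is a more detailed elaboration of the identical argument rather than a different approach.
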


\begin{proof}
 As in the case of oriented classical and singular knot theories, there is one to one correspondence between colorings before and after each of the oriented dichromatic singular Reidemeister moves.  The invariance follows directly from the equations~\ref{eq:4.1.1}, \ref{eq:4.1.2}, \ref{eq:4.1.3} and \ref{eq:4.1.4} given in Definition~\ref{Def4.1}.
\end{proof}

 Now we need examples of oriented disingquandles in order to compute the coloring invariant.  But first we have the following lemma.
 
\begin{lemma} \label{lemma4.4}
    Let $n>2$ be an integer.  Consider the quandle operation on $\mathbb{Z}_n$ given by $x*y=ax+(1-a)y$, $x \bar{*} y=a^{-1}x+(1-a^{-1})y$ and assume that $a$ is an invertible such that $a^2 \neq 1$.  Assume that $R_1(x,y) =\alpha +\beta x +\gamma y +\lambda x^2+\mu y^2+ \delta xy.$  Let $R_2$ be given by $R_2(x,y)=R_1(y,x*y)$.  Then the conditions $(1-a)\alpha=(1-a)[1-\beta - \gamma]=(1-a)\lambda=(1-a)\mu=(1-a)\delta=0$ 
    imply that $(\mathbb{Z}_n,*,R_1,R_2)$ is an oriented singquandle.
\end{lemma}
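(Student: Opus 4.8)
The plan is to reduce the whole statement to a finite coefficient comparison. First I would record that $(\mathbb{Z}_n,*)$ with $x*y=ax+(1-a)y$ is the Alexander quandle attached to the unit $a$: since $a$ is invertible, $x\bar{*}y=a^{-1}x+(1-a^{-1})y$ is its dual operation (one checks $(x*y)\bar{*}y=x$ directly), while idempotency and self-distributivity are immediate, so the quandle axioms hold; here only invertibility of $a$ is used, the standing hypothesis $a^2\neq 1$ merely guaranteeing that $*$ and $\bar{*}$ differ. Because $R_2$ is \emph{defined} by $R_2(x,y)=R_1(y,x*y)$, axiom (2.2.4) holds for free, and substituting $x*y=ax+(1-a)y$ into $R_1$ yields an explicit quadratic expression for $R_2$. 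Hence it remains only to verify the four axioms (2.2.1), (2.2.2), (2.2.3) and (2.2.5).

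For each of these I would move every term to one side to get a polynomial $P\in\mathbb{Z}_n[x,y,z]$ that must vanish identically, then expand using the affine forms of $*,\bar{*}$ and $R_1(x,y)=\alpha+\beta x+\gamma y+\lambda x^2+\mu y^2+\delta xy$. The key preliminary remark is that all four identities are degree-balanced: since $*,\bar{*}$ are affine and $R_1$ (hence $R_2$) is quadratic, both sides of each axiom have total degree at most $2$, so $P$ is supported among the monomials $1,x,y,z$ and the degree-two monomials $x^2,y^2,z^2,xy,xz,yz$. It therefore suffices to show that each of these coefficients is $0$ in $\mathbb{Z}_n$.

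The heart of the argument is the claim that, after collecting terms, each coefficient of $P$ is a $\mathbb{Z}_n[a,a^{-1}]$-combination of the five quantities $(1-a)\alpha$, $(1-a)(1-\beta-\gamma)$, $(1-a)\lambda$, $(1-a)\mu$, $(1-a)\delta$ (in a few places carrying an extra factor $(1-a)$). I would check this axiom by axiom. For instance, in (2.2.1) the $x$ and $z$ coefficients match identically, while the constant, $y$, $x^2$, $z^2$ and $xz$ coefficients collapse to $-(1-a)\alpha$, $(1-a)(1-\beta-\gamma)$, $a^{-1}(1-a)\lambda$, $a(1-a)\mu$ and $(1-a)\delta$ respectively, and the three remaining coefficients ($y^2,xy,yz$) are $\mathbb{Z}_n[a,a^{-1}]$-combinations of $(1-a)\lambda$, $(1-a)\mu$ and $(1-a)\delta$ (the $y^2$ term carrying an extra factor $(1-a)$). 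The hypotheses then annihilate every coefficient, using $(1-a)^2\lambda=(1-a)\big[(1-a)\lambda\big]=0$ and its analogues for $\mu$. Axioms (2.2.2), (2.2.3) and (2.2.5) are handled in exactly the same fashion, the only additional bookkeeping arising from the quadratic shape of $R_2=R_1(y,x*y)$.

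The main obstacle is computational rather than conceptual: axioms (2.2.3) and (2.2.5) each involve a nested substitution of the quadratic $R_2$ into an affine operation, so the expansions are long and sign or $a^{-1}$ slips are easy to make. The organizing principle that keeps this under control is precisely that every discrepancy must factor through $(1-a)$ times one of the five defect parameters $\alpha,\,1-\beta-\gamma,\,\lambda,\,\mu,\,\delta$; this both predicts the form of each coefficient and serves as a built-in consistency check on the algebra. Once all the coefficients are shown to lie in the ideal generated by these five vanishing quantities, the four axioms hold and $(\mathbb{Z}_n,*,R_1,R_2)$ is an oriented singquandle.
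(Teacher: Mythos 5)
Your proposal is correct and follows essentially the same route as the paper's proof: a direct expansion of the axioms using the affine form of $*,\bar{*}$ and the quadratic form of $R_1$, noting that (2.2.4) holds by the definition of $R_2$ and that the remaining identities reduce to coefficient conditions that vanish under the stated hypotheses. Your explicit bookkeeping of the coefficients of (2.2.1) (including the constant term giving $(1-a)\alpha$) is a slightly fuller account of the same computation the paper summarizes.
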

\begin{proof}
Substituting $R_1$, $R_2$, $*$ and $\bar{*}$ into equation (\ref{eq:2.2.1}) gives the following conditions
\[
(1-a)[1-\beta - \gamma]=(1-a)\lambda=(1-a)\mu=(1-a)\delta=0.
\]
Using these conditions one checks that both equations (\ref{eq:2.2.2}) and (\ref{eq:2.2.3}) hold. Now equation (\ref{eq:2.2.4}) holds by definition.  By expanding equation (\ref{eq:2.2.5}) one obtains a tautology.
\end{proof}

\begin{corollary}\label{Cor}
    
    With the same hypotheses as the previous Lemma~\ref{lemma4.4} and assuming $*_1=*_2$ then the quintuple $(\mathbb{Z}_n, *_1, *_2, \mathbf{R_1}, \mathbf{R_2})$ is an {\it oriented disingquandle.} 
\end{corollary}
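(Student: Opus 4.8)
The plan is to check each clause of Definition~\ref{Def4.1} directly, exploiting the hypothesis $*_1=*_2$ to reduce everything to Lemma~\ref{lemma4.4}. First I would set $*:=*_1=*_2$ and observe that both $(\mathbb{Z}_n,*_1)$ and $(\mathbb{Z}_n,*_2)$ are then the single quandle $(\mathbb{Z}_n,*)$ supplied by Lemma~\ref{lemma4.4}, so the requirement that each be a quandle is immediate. For the same reason the two quadruples $(\mathbb{Z}_n,*_1,\mathbf{R_1},\mathbf{R_2})$ and $(\mathbb{Z}_n,*_2,\mathbf{R_1},\mathbf{R_2})$ literally coincide with the oriented singquandle produced by that lemma, so the singquandle axioms (\ref{eq:2.2.1})--(\ref{eq:2.2.5}) hold automatically.

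The next step is to verify the four disingquandle axioms (\ref{eq:4.1.1})--(\ref{eq:4.1.4}). The key point is that setting $*_1=*_2=*$ (and hence $\bar{*}_1=\bar{*}_2=\bar{*}$) collapses these into singquandle axioms already in hand. Under this substitution both (\ref{eq:4.1.1}) and (\ref{eq:4.1.2}) become
\[
(y\,\bar{*}\,\mathbf{R_1}(x,z))*x=(y*\mathbf{R_2}(x,z))\,\bar{*}\,z,
\]
which is exactly (\ref{eq:2.2.3}); similarly both (\ref{eq:4.1.3}) and (\ref{eq:4.1.4}) become
\[
\mathbf{R_1}(x,y)*\mathbf{R_2}(x,y)=\mathbf{R_2}(y,x*y),
\]
which is exactly (\ref{eq:2.2.5}). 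Since Lemma~\ref{lemma4.4} already guarantees (\ref{eq:2.2.3}) and (\ref{eq:2.2.5}), all four additional axioms follow at once.

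There is no genuine obstacle here: the content of the corollary is precisely that the extra disingquandle axioms carry no new information once the two quandle operations are forced to agree. The verification is therefore purely formal---no new polynomial conditions on $\alpha,\beta,\gamma,\lambda,\mu,\delta$ arise beyond those already imposed in Lemma~\ref{lemma4.4}, and the two displayed equalities are recognized by inspection rather than obtained by computation. The only thing worth stating carefully is that the pairwise collapse $(\ref{eq:4.1.1})\equiv(\ref{eq:4.1.2})\equiv(\ref{eq:2.2.3})$ and $(\ref{eq:4.1.3})\equiv(\ref{eq:4.1.4})\equiv(\ref{eq:2.2.5})$ is literal, so that the hypotheses of Lemma~\ref{lemma4.4} suffice verbatim.
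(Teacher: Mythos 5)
Your proposal is correct and matches the paper's intended argument: the paper omits a proof of Corollary~\ref{Cor} precisely because, once $*_1=*_2$, axioms (\ref{eq:4.1.1})--(\ref{eq:4.1.4}) collapse verbatim to (\ref{eq:2.2.3}) and (\ref{eq:2.2.5}), which Lemma~\ref{lemma4.4} already supplies. Nothing further is needed.
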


Using Lemma~\ref{lemma4.4} and Corollary~\ref{Cor}, one can obtain many examples of oriented disingquandles.  Here we give a few  explicit examples.
\begin{example}\label{Uno}
 Consider $X=\mathbb{Z}_{10}$ with $x*_1y=x*_2y=x*y=3x-2y$, $x \bar{*}y=7x-6y$, $R_1(x,y)=5+x+5y+5x^2+5y^2+5xy$ and $R_2(x,y)=R_1(y,x*y)=6x+5xy$, then the quintuple $(\mathbb{Z}_{10}, *_1, *_2, \mathbf{R_1}, \mathbf{R_2})$ is an {\it oriented disingquandle}.  Notice that in this example $1-a=-2$.  We chose $\gamma=5$, $\beta=1$ and $\alpha=\lambda=\mu=\delta=5$.
\end{example}

\begin{example}
 Lemma~\ref{lemma4.4} and Corollary~\ref{Cor} give the following disingquandle  $(\mathbb{Z}_{10},*_1,*_2, R_1,R_2)$, where $x*_1y=x*_2y=x*y = 3x-2y$ and $x \bar{*}y=7x-6y$.  Let $R_1(x,y) =4x+2y+ 5xy$.  Thus $R_2(x,y)= 6x+5xy$.  
\end{example}

\begin{example}\label{Z30}
     Consider $X=\mathbb{Z}_{30}$ with $x*_1y=x*_2y=x*y=13x-12y$, $x \bar{*}y=7x-6y$.
     $R_1(x,y)=5-9x+5y+10x^2+15y^2+20xy$ and $R_2(x,y)=R_1(y,x*y)=5+5x+21y+15x^2+10y^2+20xy,$ then the quintuple $(\mathbb{Z}_{30}, *_1, *_2, \mathbf{R_1}, \mathbf{R_2})$ is an {\it oriented disingquandle}.  
\end{example}

    \begin{example}\label{Z60}
     Consider $X=\mathbb{Z}_{60}$ with $x*_1y=x*_2y=x*y=7x-6y$, $x \bar{*}y=-17x+18y$.
     Since $1-a=-6$, we choose $\gamma=5$, $R_1(x,y)=10+6x+5y+10x^2+20y^2+30xy$ and $R_2(x,y)=R_1(y,x*y)=10+35x-24y+20x^2+10y^2,$ then the quintuple $(\mathbb{Z}_{60}, *_1, *_2, \mathbf{R_1}, \mathbf{R_2})$ is an {\it oriented disingquandle}.  
\end{example}

Since a quandle can be represented by a matrix, so the same representation is possible for an oriented disingquandle. Let $X=\{x_1,x_2,\ldots,x_n\}$ be a finite oriented disingquandle with two operations $*_1$, $*_2$ and the additional maps $\mathbf{R_1}$ and $\mathbf{R_2}$. The {\it presentation matrix} of the oriented disingquandle $(X,*_1, *_2, R_1, R_2)$, denoted by $M_X$, is defined to be the block matrix:
	$$M_X=\left[\begin{array}{c|c|c|c}
	M^1&M^2&M^3&M^4
	\end{array} \right],$$ where $M^1=(m^1_{ij})_{1\leq i,j\leq n}$, $M^2=(m^2_{ij})_{1\leq i,j\leq n}$, $M^3=(m^3_{ij})_{1\leq i,j\leq n}$ and $M^4=(m^4_{ij})_{1\leq i,j\leq n}$ are $n\times n$ matrices with the entries in $X$ given by $$m^k_{ij}=\left\{
	\begin{array}{ll}
	{x_i} *_1 {x_j} & \hbox{for $k=1$,} \\
	{x_i} *_2 {x_j}& \hbox{for $k=2$,}\\
        R_1(x_i, x_j)& \hbox{for $k=3$,}\\
        R_2(x_i, x_j)& \hbox{for $k=4$.}
	\end{array}%
	\right.$$  
 \begin{example}\label{example4.8}
 In this example, we give the matrix presentation of the oriented disingquandle of order $10$ given in Example~\ref{Uno}.  The matrices $M^1, M^2, M^3$ and $M^4$ below are respectively the Cayley tables of the operations $*_1$, $*_2$, and the maps $R_1$ and $R_2$. 

\begin{equation*}
	M_X=\left[\begin{array}{c|c|c|c} 
	M^1 & M^2 & M^3 & M^4
	\end{array} \right]. 
	\end{equation*}

 Where
 
\begin{equation*}
M^1= \begin{bmatrix}
0 & 8 & 6 & 4 & 2 & 0 & 8 & 6 & 4 & 2 \\
3 & 1 & 9 & 7 & 5 & 3 & 1 & 9 & 7 & 5 \\
6 & 4 & 2 & 0 & 8 & 6 & 4 & 2 & 0 & 8 \\
9 & 7 & 5 & 3 & 1 & 9 & 7 & 5 & 3 & 1 \\
2 & 0 & 8 & 6 & 4 & 2 & 0 & 8 & 6 & 4 \\
5 & 3 & 1 & 9 & 7 & 5 & 3 & 1 & 9 & 7 \\
8 & 6 & 4 & 2 & 0 & 8 & 6 & 4 & 2 & 0 \\
1 & 9 & 7 & 5 & 3 & 1 & 9 & 7 & 5 & 3 \\
4 & 2 & 0 & 8 & 6 & 4 & 2 & 0 & 8 & 6 \\
7 & 5 & 3 & 1 & 9 & 7 & 5 & 3 & 1 & 9
\end{bmatrix}, \hspace{4em}
M^2= \begin{bmatrix}
0 & 8 & 6 & 4 & 2 & 0 & 8 & 6 & 4 & 2 \\
3 & 1 & 9 & 7 & 5 & 3 & 1 & 9 & 7 & 5 \\
6 & 4 & 2 & 0 & 8 & 6 & 4 & 2 & 0 & 8 \\
9 & 7 & 5 & 3 & 1 & 9 & 7 & 5 & 3 & 1 \\
2 & 0 & 8 & 6 & 4 & 2 & 0 & 8 & 6 & 4 \\
5 & 3 & 1 & 9 & 7 & 5 & 3 & 1 & 9 & 7 \\
8 & 6 & 4 & 2 & 0 & 8 & 6 & 4 & 2 & 0 \\
1 & 9 & 7 & 5 & 3 & 1 & 9 & 7 & 5 & 3 \\
4 & 2 & 0 & 8 & 6 & 4 & 2 & 0 & 8 & 6 \\
7 & 5 & 3 & 1 & 9 & 7 & 5 & 3 & 1 & 9
\end{bmatrix},
\end{equation*}

\begin{equation*}
M^3= \begin{bmatrix}
0 & 2 & 4 & 6 & 8 & 0 & 2 & 4 & 6 & 8 \\
4 & 1 & 8 & 5 & 2 & 9 & 6 & 3 & 0 & 7 \\
8 & 0 & 2 & 4 & 6 & 8 & 0 & 2 & 4 & 6 \\
2 & 9 & 6 & 3 & 0 & 7 & 4 & 1 & 8 & 5 \\
6 & 8 & 0 & 2 & 4 & 6 & 8 & 0 & 2 & 4 \\
0 & 7 & 4 & 1 & 8 & 5 & 2 & 9 & 6 & 3 \\
4 & 6 & 8 & 0 & 2 & 4 & 6 & 8 & 0 & 2 \\
8 & 5 & 2 & 9 & 6 & 3 & 0 & 7 & 4 & 1 \\
2 & 4 & 6 & 8 & 0 & 2 & 4 & 6 & 8 & 0 \\
6 & 3 & 0 & 7 & 4 & 1 & 8 & 5 & 2 & 9
\end{bmatrix},  \hspace{4em}
M^4= \begin{bmatrix}
0 & 0 & 0 & 0 & 0 & 0 & 0 & 0 & 0 & 0 \\
6 & 1 & 6 & 1 & 6 & 1 & 6 & 1 & 6 & 1 \\
2 & 2 & 2 & 2 & 2 & 2 & 2 & 2 & 2 & 2 \\
8 & 3 & 8 & 3 & 8 & 3 & 8 & 3 & 8 & 3 \\
4 & 4 & 4 & 4 & 4 & 4 & 4 & 4 & 4 & 4 \\
0 & 5 & 0 & 5 & 0 & 5 & 0 & 5 & 0 & 5 \\
6 & 6 & 6 & 6 & 6 & 6 & 6 & 6 & 6 & 6 \\
2 & 7 & 2 & 7 & 2 & 7 & 2 & 7 & 2 & 7 \\
8 & 8 & 8 & 8 & 8 & 8 & 8 & 8 & 8 & 8 \\
4 & 9 & 4 & 9 & 4 & 9 & 4 & 9 & 4 & 9
\end{bmatrix}. 
\end{equation*}
 
\end{example}

\begin{definition}\label{Def4.2}
Let $(X, *_1, *_2, R_1, R_2)$ and $(Y, *'_1, *'_2, R'_1, R'_2)$ be two oriented disingquandles. A map $f:X\to Y$ is called a {\it homomorphism} if $f(x*_1y)=f(x)*'_1f(y)$, $f(x*_2y)=f(x)*'_2f(y)$, $f(R_1(x,y))=R'_1(f(x),f(y))$ and $f(R_2(x,y))=R'_1(f(x),f(y))$ for all $x,y\in X.$\\
\par If a homomorphism is bijective, then it is called an {\it isomorphism}. Two oriented disingquandles $(X, *_1, *_2, R_1, R_2)$ and $(Y, *'_1, *'_2, R'_1, R'_2)$ are said to be {\it isomorphic} if there exists an isomorphism between them.
\end{definition}
\begin{definition}\label{Deff4.3}
    Let $(X, *_1, *_2, R_1, R_2)$ be a disingquandle. A non-empty subset $S$ of $X$ is called a subdisingquandle of $X$ if and only if $S$ is itself a disingquandle.
\end{definition}


\begin{lemma}
    The image, $\textit{Im}(f)$, of any oriented disingquandle homomorphism $f \colon (X, *_1, *_2, R_1, R_2) \rightarrow (Y, \triangleright_1, \triangleright_2, R'_1, R'_2)$ is a sub-disingquandle. 
\end{lemma}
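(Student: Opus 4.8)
The plan is to reduce the statement to two things: that $\mathit{Im}(f)$ is closed under the four operations $\triangleright_1,\triangleright_2,R'_1,R'_2$ of $Y$, and that the disingquandle axioms automatically hold on any such closed subset because they are universally quantified identities. This mirrors the strategy behind Lemma~\ref{lem2.1} for oriented singquandles and Definition~\ref{Deff4.3} for sub-disingquandles.

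First I would note that $\mathit{Im}(f)$ is non-empty, since $X$ underlies a quandle and is therefore non-empty. The key step is closure. Take any two elements of the image, say $u=f(x)$ and $v=f(y)$ with $x,y\in X$. Applying the homomorphism conditions of Definition~\ref{Def4.2} gives
\[
u\triangleright_1 v=f(x)\triangleright_1 f(y)=f(x*_1 y),\qquad u\triangleright_2 v=f(x)\triangleright_2 f(y)=f(x*_2 y),
\]
\[
R'_1(u,v)=R'_1(f(x),f(y))=f(R_1(x,y)),\qquad R'_2(u,v)=R'_2(f(x),f(y))=f(R_2(x,y)),
\]
each of which is again of the form $f(\text{something in }X)$ and hence lies in $\mathit{Im}(f)$. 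Thus $\mathit{Im}(f)$ is closed under all four operations inherited from $Y$. (Here I would flag that the last displayed identity uses $R'_2$ on the right-hand side, correcting the evident typographical slip in Definition~\ref{Def4.2}, where $R'_1$ appears in place of $R'_2$.)

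Next I would argue that the axioms transfer for free. Every defining identity of an oriented disingquandle—the singquandle axioms $(\ref{eq:2.2.1})$–$(\ref{eq:2.2.5})$ applied to each of $\triangleright_1$ and $\triangleright_2$, together with the four additional axioms $(\ref{eq:4.1.1})$–$(\ref{eq:4.1.4})$—is a universally quantified equation that holds for all elements of $Y$. Since $\mathit{Im}(f)\subseteq Y$ and, by the previous paragraph, is closed under the operations, each such identity holds in particular when its variables range over $\mathit{Im}(f)$. Consequently $(\mathit{Im}(f),\triangleright_1,R'_1,R'_2)$ and $(\mathit{Im}(f),\triangleright_2,R'_1,R'_2)$ are oriented singquandles (invoking Lemma~\ref{lem2.1}), and the four mixed axioms of Definition~\ref{Def4.1} are satisfied as well.

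Combining these observations, $(\mathit{Im}(f),\triangleright_1,\triangleright_2,R'_1,R'_2)$ is itself an oriented disingquandle, and being a non-empty subset of $Y$, it is a sub-disingquandle by Definition~\ref{Deff4.3}. I do not anticipate a genuine obstacle here: the content is entirely the closure computation, since inheritance of the equational axioms from $Y$ to a closed subset is automatic. The only point demanding care is bookkeeping—ensuring the homomorphism is required to intertwine $R_2$ with $R'_2$ (not $R'_1$), so that closure under $R'_2$ actually follows.
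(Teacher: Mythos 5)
Your proof is correct and follows essentially the same route as the paper's: establish closure of $\mathit{Im}(f)$ under $\triangleright_1,\triangleright_2,R'_1,R'_2$ via the homomorphism conditions, then observe that the universally quantified identities of Definitions~\ref{Def2.2} and \ref{Def4.1} hold in $Y$ and hence automatically on any closed subset. Your additional remark that the condition $f(R_2(x,y))=R'_1(f(x),f(y))$ in Definition~\ref{Def4.2} should read $R'_2$ on the right-hand side is a valid catch of a typographical slip and is indeed needed for closure under $R'_2$.
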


\begin{proof}
Let $f \colon (X, *_1, *_2, R_1, R_2) \rightarrow (Y, \triangleright_1, \triangleright_2, R'_1, R'_2)$ be a disingquandle homomorphism.  Definition~\ref{Def4.2} implies that $\textit{Im}(f)$ is closed under $\triangleright_1$, $\triangleright_2$, $R'_1$ and $R'_2$.  Since the identities of both Definition~\ref{Def2.2} and Definition~\ref{Def4.1} are satisfied in $Y$ then they are automatically satisfied in $\textit{Im}(f)$.  This ends the proof. 
\end{proof}

\begin{theorem}\label{Theorem4.1}
    Let $(X, *_1, *_2, R_1, R_2)$ be a oriented disingquandle. A non-empty subset $S$ of $X$ is an oriented sub-disingquandle of $X$ if and only if $S$ is closed under the operations $*_1, *_2, R_1$ and $R_2$.
\end{theorem}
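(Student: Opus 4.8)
The plan is to establish the two implications separately, in close analogy with Lemma~\ref{lem2.1}. The forward implication is immediate from the definition of a sub-disingquandle: if $S$ is an oriented sub-disingquandle of $X$, then by Definition~\ref{Deff4.3} the subset $S$ is itself an oriented disingquandle under the operations $*_1, *_2, R_1, R_2$. For these to constitute the structure on $S$ they must be well-defined as maps $S\times S\to S$, and this is precisely the assertion that $S$ is closed under $*_1, *_2, R_1$ and $R_2$. So closure is a necessary condition, giving the ``only if'' direction at once.

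For the converse, suppose $S$ is a non-empty subset closed under $*_1, *_2, R_1$ and $R_2$. First I would note that each of these four operations then restricts to a genuine binary operation on $S$, so the quintuple $(S, *_1, *_2, R_1, R_2)$ is a well-formed candidate structure. The essential observation is that every defining condition of an oriented disingquandle is a \emph{universally quantified identity}: the quandle axioms for $(S,*_1)$ and for $(S,*_2)$, the five oriented-singquandle axioms $(\ref{eq:2.2.1})$--$(\ref{eq:2.2.5})$ of Definition~\ref{Def2.2} for each of $(S,*_1,R_1,R_2)$ and $(S,*_2,R_1,R_2)$, and the four disingquandle axioms $(\ref{eq:4.1.1})$--$(\ref{eq:4.1.4})$ of Definition~\ref{Def4.1}. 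Since $(X, *_1, *_2, R_1, R_2)$ is an oriented disingquandle, all of these identities hold for every choice of elements of the ambient set $X$; restricting the quantified variables to $S\subseteq X$, each identity continues to hold verbatim. Thus, once every operation occurring in the axioms is known to map $S$ into $S$, all axioms are automatically inherited and $(S, *_1, *_2, R_1, R_2)$ is an oriented disingquandle.

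The step I expect to be the main obstacle is precisely this last qualification, namely closure under the dual operations $\bar{*}_1$ and $\bar{*}_2$, which appear inside axioms $(\ref{eq:2.2.1})$--$(\ref{eq:2.2.3})$ and $(\ref{eq:4.1.1})$--$(\ref{eq:4.1.2})$ yet are not among the operations listed in the hypothesis. To resolve this I would use finiteness of the disingquandle (which is the setting in which the counting invariant is computed): for a fixed $y\in S$ the map $s\mapsto s*_i y$ sends the finite set $S$ into itself by the assumed closure, and it is injective because right multiplication in the quandle $(X,*_i)$ is a bijection; an injection of a finite set into itself is surjective, so every $x\in S$ has a $\bar{*}_i$-preimage $x\,\bar{*}_i\, y\in S$. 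Hence $S$ is closed under $\bar{*}_1$ and $\bar{*}_2$ as well. With closure under all six operations in hand, the inheritance argument of the preceding paragraph applies without change, and by Lemma~\ref{lem2.1} the restrictions $(S,*_i,R_1,R_2)$ are oriented sub-singquandles while axioms $(\ref{eq:4.1.1})$--$(\ref{eq:4.1.4})$ hold on $S$ by restriction, completing the proof.
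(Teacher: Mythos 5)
Your proposal follows the same route as the paper's proof: the forward direction is immediate from Definition~\ref{Deff4.3}, and the converse combines closure (via Lemma~\ref{lem2.1}) with the observation that the remaining axioms (\ref{eq:4.1.1})--(\ref{eq:4.1.4}) are universally quantified identities that restrict verbatim to any subset. The one place you genuinely go beyond the paper is the treatment of the dual operations $\bar{*}_1$ and $\bar{*}_2$: the paper's proof never mentions them, silently delegating the issue to the cited Lemma~\ref{lem2.1}, whereas you correctly observe that closure under $*_i$ alone does not formally give closure under $\bar{*}_i$, and you close the gap with the standard injectivity-plus-finiteness argument. That argument is correct, but note that it imports a finiteness hypothesis that is not in the statement of Theorem~\ref{Theorem4.1}; in the infinite case your proof (and, arguably, the theorem as stated) would require either adding $\bar{*}_i$-closure to the hypotheses or adopting the convention that ``closed under $*_i$'' means closed under the right translations and their inverses. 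Since the invariant in Section~\ref{CIUSDL} is only ever computed with finite disingquandles, this costs nothing in practice, and your version is the more careful of the two.
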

\begin{proof}
    Let $S$ be a sub-disingquandle of the oriented disingquandle $(X, *_1, *_2, R_1, R_2)$, then by definition \ref{Def4.1} it follows that $(S, *_1, R_1, R_2)$ and $(S, *_2, R_1, R_2)$ are oriented singquandles. Therefore by the Lemma \ref{lem2.1}, it follows that $S$ is closed under the operations $*_1, *_2, R_1$ and $R_2$.
    \par Conversely suppose that $S$ is closed under the operations $*_1, *_2, R_1$ and $R_2$. Then by the Lemma \ref{lem2.1}, it follows that $(S, *_1, R_1, R_2)$ and $(S, *_2, R_1, R_2)$ are oriented singquandles. Since the identities of Definition~\ref{Def4.1} are satisfied in $X$ then they are automatically satisfied in $S$.

\end{proof}


\section{Invariants for Oriented Dichromatic Singular Links}\label{CIUSDL}

Let $D$ be an oriented dichromatic singular link diagram $\mathbb R^2$ corresponding to an oriented dichromatic singular link $\mathnormal{L}$ in $\mathbb R^3$ and let $\mathcal A(D)$ denote the set of all arcs of $D$. Let $(X,*_1, *_2, \mathbf{R_1}, \mathbf{R_2})$ be an oriented disingquandle.  An {\it oriented disingquandle coloring} of $D$ by $X$, or simply {\it oriented disingquandle $X$-coloring} of $D$, is a map $\mathcal{C}: \mathcal A(D) \rightarrow X$ such that at every classical and singular crossing, the relations depicted in Figure \ref{OdsLinkColoring} hold. The oriented disingquandle element $\mathcal{C}(s)$ is called a {\it color} of the arc $s$ and the pair $(D, \mathcal C)$ is called the {\it $X$-colored oriented dichromatic singular link diagram by $\mathcal C$}. The set of all oriented disingquandle $X$-colorings of $D$ is denoted by ${\rm Col}^{odsq}_X(D)$. Thus we have the following results:

\begin{lemma}\label{lem5.1} 
Let $D$ and $D'$ be two oriented dichromatic singular link diagrams in $\mathbb R^2$ that can be transformed into each other by oriented generalized dichromatic singular Reidemeister moves as shown in Figure \ref{OdsRMoves}. Then for any finite oriented disingquandle $X$, there is a one-to-one correspondence between ${\rm Col}^{odsq}_X(D)$ and ${\rm Col}^{odsq}_X(D')$. 
\end{lemma}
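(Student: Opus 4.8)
The plan is to reduce the statement to a single move and then exhibit an explicit bijection for each move in Figure~\ref{OdsRMoves}, using the disingquandle axioms. Since any two diagrams related by a finite sequence of generalized oriented dichromatic singular Reidemeister moves differ by a chain of single moves, and a composite of bijections is again a bijection, it suffices to treat the case in which $D$ and $D'$ differ by exactly one move performed inside a disk $B \subset \mathbb{R}^2$; outside $B$ the two diagrams coincide arc-for-arc.

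First I would set up a map $\Phi \colon {\rm Col}^{odsq}_X(D) \to {\rm Col}^{odsq}_X(D')$ as follows. Given a coloring $\mathcal{C}$ of $D$, I keep the colors of $\mathcal{C}$ on every arc lying outside $B$ (these arcs are common to $D$ and $D'$), and assign colors to the arcs of $D'$ contained in $B$ by propagating through the crossings inside $B$ via the coloring rules of Figure~\ref{OdsLinkColoring}. The colors entering $B$ across $\partial B$ are fixed by $\mathcal{C}$, so the only point needing verification for well-definedness is that the colors these rules force on the arcs leaving $B$ agree with the colors $\mathcal{C}$ already assigns to the same boundary arcs on the $D$-side. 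Defining $\Phi^{-1}$ symmetrically, the two maps are mutually inverse by construction, so the entire content of the lemma reduces to this boundary-matching check, carried out move by move.

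Then I would run through the moves, observing that the label variants $i,j,k \in \{1,2\}$ simply select which of $*_1, *_2$ (and their duals $\bar{*}_1, \bar{*}_2$) governs each crossing. For the classical moves $RI$, $RII$, $RIII$ the boundary-matching identities are exactly idempotency $x *_i x = x$, the fact that $\bar{*}_i$ is inverse to $*_i$, and right self-distributivity of each quandle $(X,*_i)$; all of these hold because $(X,*_1)$ and $(X,*_2)$ are quandles by Definition~\ref{Def4.1}. For the singular moves $RIVa$, $RIVb$, $RV$ in which both interacting strands carry the same label $i$, the required identities are precisely the oriented singquandle axioms (\ref{eq:2.2.1})--(\ref{eq:2.2.5}) for $(X, *_i, \mathbf{R_1}, \mathbf{R_2})$; for the mixed variants, where the two strands carry different labels, the matching is supplied by the cross-axioms (\ref{eq:4.1.1})--(\ref{eq:4.1.4}). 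In every case the incoming colors across $\partial B$ determine all interior and outgoing colors uniquely on each side, which is what makes $\Phi$ and $\Phi^{-1}$ bijections. Much of this bookkeeping is already recorded in Lemma~\ref{lem4.1}; here it is merely repackaged as an explicit one-to-one correspondence.

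The hard part will be the mixed singular variants of $RIVa$, $RIVb$ and $RV$ in which the two strands meeting at the singular crossing carry different labels, since these are the genuinely new configurations compared with the single oriented singquandle theory of \cite{BEHY}. For these I expect to have to track carefully which operation, $*_1$ or $*_2$, is attached to each of the arcs incident to the singular vertex, and to confirm that the colors produced on the outgoing arcs on the two sides of the move coincide exactly when axioms (\ref{eq:4.1.1})--(\ref{eq:4.1.4}) hold --- which is unsurprising, as these axioms were extracted from precisely these configurations. Once the boundary-matching is confirmed for all labeled variants of all six moves, the bijection follows and the proof is complete.
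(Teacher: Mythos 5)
Your proposal is correct and follows essentially the same approach as the paper, which simply asserts the one-to-one correspondence move by move and cites the quandle, oriented singquandle, and disingquandle axioms as supplying the required boundary-matching identities. Your write-up is considerably more explicit about the local-disk bijection and the labeled variants, but it is an elaboration of the same argument rather than a different route.
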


\begin{proof}
    There is one-to-one correspondence between colorings before and after each of the 
    moves in Figure~\ref{OdsRMoves}.  The invariance follows automatically from equations~(\ref{eq:2.2.1}), (\ref{eq:2.2.2}), (\ref{eq:2.2.3}), (\ref{eq:2.2.4}), (\ref{eq:2.2.5}), (\ref{eq:4.1.1}), (\ref{eq:4.1.2}), (\ref{eq:4.1.3}) and (\ref{eq:4.1.4}).
\end{proof}
\par In an $X$-colored oriented dichromatic singular link diagram $(D, \mathcal C)$, we treat the elements of an oriented disingquandle $X$ as labels for the arcs in $D$ with different operations at crossings as shown in Figure \ref{OdsLinkColoring}. Then it is clear from Lemma \ref{lem5.1} that the oriented disingquandle axioms of Definition \ref{Def4.1} are transcriptions of a generating set of oriented generalized oriented Reidemeister moves for oriented dichromatic singular links which are sufficient to generate any other oriented generalized dichromatic singular Reidemeister moves. That is, the axioms \ref{eq:4.1.1}, \ref{eq:4.1.2}, \ref{eq:4.1.3} and \ref{eq:4.1.4} come from the oriented generalized dichromatic singular Reidemeister move $RIVa$, $RIVb$ and $RV$ as seen in Figure \ref{OdsRMoves}.
	
\begin{theorem}\label{theorem5.1} 
Let $L$ be an oriented dichromatic singular link in $\mathbb R^3$ and let $D$ be a diagram of $L$. Then for any finite oriented disingquandle $X$, the cardinality $\sharp{\rm Col}^{odsq}_X(L)$ is an invariant of $L$.
\end{theorem}
	
\begin{proof}
		Let $D'$ be any other oriented dichromatic singular link diagram of $L$ obtained from $D$ by applying a finite number of oriented generalized dichromatic singular Reidemeister moves. Then it is evident from Lemma \ref{lem5.1} that $\sharp{\rm Col}^{odsq}_X(D')=\sharp{\rm Col}^{odsq}_X(D)$. This completes the proof.
	\end{proof}	
If $X$ is a finite oriented disingquandle, we call the cardinality $\sharp{\rm Col}^{odsq}_X(D)$ the {\it oriented disingquandle $X$-coloring number} or the {\it oriented disingquandle counting invariant} of $L$, and denote it by $\mathbb Z^{odsq}_X(L)$, i.e., $\mathbb Z^{odsq}_X(L)=\sharp{\rm Col}^{odsq}_X(D).$

\begin{example}
Let $\mathcal{L}$ be the set of the \emph{eighteen} link diagrams shown in  Figure~\ref{TOSDLinks}.  The goal of this example is to distinguish some of the links in the set $\mathcal{L}$.
 First, we use the oriented disingquandle $(\mathbb{Z}_{10},*_1,*_2, R_1,R_2)$ of Example~\ref{Uno}, where the operations $*_1=*_2=*$ such that  $x* y = 3x-2y$, $x \bar{*}y=7x-6y$, $R_1(x,y) =4x+2y+ 5xy$ and $R_2(x,y)= 6x+5xy$.   
Using the labelings of the $18$ links as given in Figure~\ref{TOSDLinks}, we compute the number of colorings $\mathbb Z^{odsq}_{10}(L)$ 
for each link $L$ in $\mathcal{L}$.  The computations were done by writing the set of equations obtained at each crossing of each link and solving the system of equations.   
The computations were simultaneously obtained by hand calculations and with the help of Python. 
\end{example}
\begin{table}[H]
\begin{center}
\begin{tabular}{|l|c|c||} \hline 
 $\mathbb Z^{odsq}_{10}(L)$ & Links\\  
     \hline
   $\quad \quad 10$ &   $4_1^2$, $5_1^2$, $5_2^2$, $6_1^2$, $6_2^2$, $6_3^2$,$6_5^2$,  $6_6^2$, $6_7^2$, $6_8^2$, $6_{9}^2$,,  $6_{11}^2$, $6_{12}^2$     \\
    \hline
    $\quad \quad 14$ &  $5_3^2$ \\
     \hline
   $\quad \quad 50$  & $3_1^2$, $6_4^2$, $6_{10}^2$   \\
   \hline
   $\quad \quad 75$  &   $1_1^2$   \\
    \hline
     
\end{tabular}\end{center}
\caption{Number of colorings of links of Figure~\ref{TOSDLinks} by $\mathbb{Z}_{10}$ }
\label{Table1}
\end{table}

Now using the oriented disingquandle given in Example~\ref{Z30} with  $X=\mathbb{Z}_{30}$ where the operations $x*_1y=x*_2y=x*y=13x-12y$, $x \bar{*}y=7x-6y$,
     $R_1(x,y)=5-9x+5y+10x^2+15y^2+20xy$ and $R_2(x,y)=R_1(y,x*y)=5+5x+21y+15x^2+10y^2+20xy$,   we get the following number of colorings for the link diagrams shown in the Table~\ref{TOSDLinks}. 

\begin{table}[h]
\begin{center}
\begin{tabular}{|l|c|c||} \hline 
 $\mathbb Z^{odsq}_{30}(L)$ & Links\\ 
 \hline
   $\quad \quad 0$  &  $6_{12}^2$    \\
     \hline
   $\quad \quad 30$ &  $3_1^2$, $4_1^2$, $5_1^2$, $5_2^2$,  $6_2^2$, $6_3^2$,$6_4^2$, $6_5^2$,  $6_6^2$, $6_7^2$, $6_8^2$, $6_{9}^2$, $6_{10}^2$,  $6_{11}^2$     \\
    \hline
    $\quad \quad 49$ &  $5_3^2$ \\
     \hline
   $\quad \quad 50$  & $1_1^2$     \\
   \hline
   $\quad \quad 150$  &  $6_1^2$   \\
    \hline

\end{tabular}\end{center}
\caption{Number of colorings of links of Figure~\ref{TOSDLinks} by $\mathbb{Z}_{30}$ }
\label{Table2}
\end{table}

Let $\Psi(L):= (\mathbb Z^{odsq}_{10}(L), \mathbb Z^{odsq}_{30}(L))$ 
denotes the pair of the number of colorings of a link $L$.  This is an invariant of oriented dichromatic singular Links.  For the link diagrams shown in Figure~\ref{TOSDLinks}, we use the two oriented disingquandles $\mathbb{Z}_{10} $ and $ \mathbb{Z}_{30}$ to obtain the invariant from Table~\ref{Table1} and Table~\ref{Table2}.  We then have the following table of pairs of colorings.





\begin{table}[h]

\begin{tabular}{|l|c|c||} \hline 
 $\Psi(L)$ & $L$\\ 
 \hline
   $(75, 50)$  &  $1_{1}^2$    \\
     \hline
   $(14,49)$ &  $5_3^2$ \\
    \hline
    $(10, 150)$ &  $6_1^2$ \\
     \hline
   $(50, 30)$  & $3_1^2$, $6_4^2$, $6_{10}^2$  \\
   \hline
   $(10, 30)$  &  $4_1^2$, $5_1^2$, $5_2^2$, $6_2^2$, $6_3^2$, $6_5^2$, $6_6^2$, $6_7^2$, $6_8^2$, $6_9^2$, $6_{11}^2$  \\
   \hline
    $(10, 0)$ &  $6_{12}^2$ \\
    \hline
    
\end{tabular}
\caption{Table of pairs of colorings of the link diagrams of Figure~\ref{TOSDLinks} by the two oriented disingquandles $\mathbb{Z}_{10}$ and $\mathbb{Z}_{30}$, respectively.}
\end{table}

\bigskip

\begin{figure}[ht]
\centering
\includegraphics[width=0.9\textwidth]{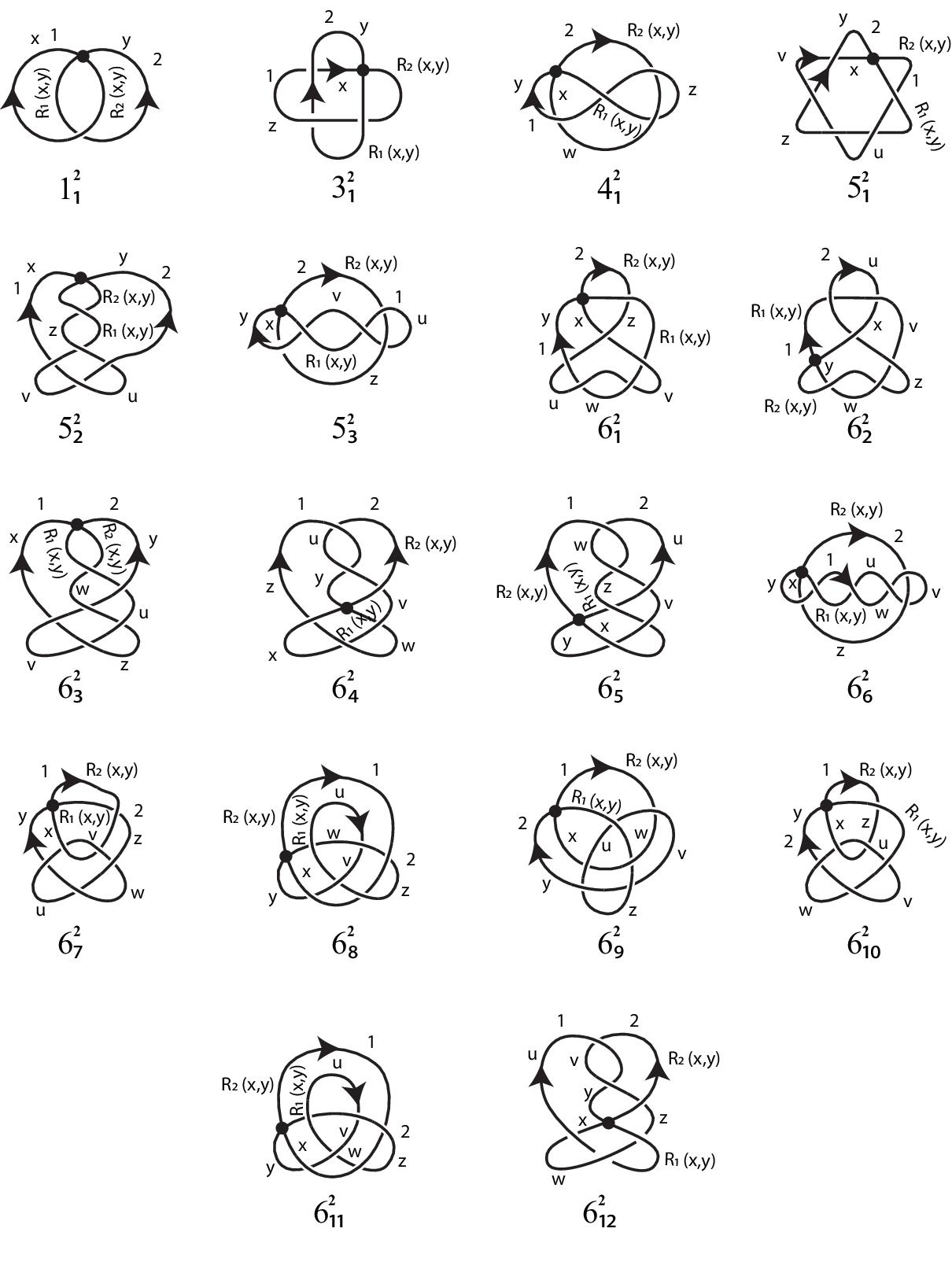}
	\caption{Table of oriented Dichromatic Singular Links}
	\label{TOSDLinks}
\end{figure}

\newpage
\section*{Acknowledgement} 
Mohamed Elhamdadi was partially supported by Simons Foundation collaboration grant 712462.

\end{document}